\newtheorem{theorem}{Theorem}[section]
\newtheorem{lemma}[theorem]{Lemma}
\newtheorem{proposition}[theorem]{Proposition}
\theoremstyle{definition}
\newtheorem{definition}[theorem]{Definition}
\newtheorem{example}[theorem]{Example}
\theoremstyle{remark}
\newtheorem{remark}[theorem]{Remark}
\newcommand{\IC}{\ensuremath{\mathbb{C}}}
\newcommand{\IR}{\ensuremath{\mathbb{R}}}
\newcommand{\IZ}{\ensuremath{\mathbb{Z}}}
\newcommand{\IN}{\ensuremath{\mathbb{N}}}
\newcommand{\IH}{\ensuremath{\mathbb{H}}}
\newcommand{\IP}{\ensuremath{\mathbb{P}}}
\newcommand{\cC}{\ensuremath{\mathcal{C}}}
\newcommand{\cF}{\ensuremath{\mathcal{F}}}
\newcommand{\dd}{\ensuremath{\mathrm{d}}}
\newcommand{\id}{\ensuremath{\mathbf{1}}}
\newcommand{\eps}{\ensuremath{\varepsilon}}
\newcommand{\gM}{\ensuremath{g\!\!M}}
\newcommand{\cM}{\ensuremath{c\!\!M}}
\newcommand{\EE}{\ensuremath{\operatorname{E}}}
\newcommand{\smallsetminus}{\ensuremath{\setminus}}
\newcommand{\re}[1]{\ensuremath{{\operatorname{Re}\left(#1\right)}}}
\newcommand{\im}[1]{\ensuremath{{\operatorname{Im}\left(#1\right)}}}
\newcommand{\sign}[1]{\ensuremath{{\operatorname{sign}\left(#1\right)}}}
\newcommand{\abs}[1]{\ensuremath{{\left\lvert#1\right\rvert}}} 
\renewcommand{\arg}[1]{\ensuremath{{\operatorname{arg}\left(#1\right)}}}
\newcommand{\SL}[1]{\ensuremath{{\mathrm{SL}\!\left(2, #1 \right)}}}
\newcommand{\Matrix}[4]{{\begin{pmatrix} #1 & #2 \\ #3 & #4 \end{pmatrix}}}
\newcommand{\OO}[1]{\ensuremath{\mathcal{O}\left( #1 \right)}}
\newcommand\txtfrac[2]{{\textstyle \frac{#1}{#2}}}
\newcommand{\MM}[1]{\ensuremath{\tilde{M}_{#1}}}
\newcommand{\WW}[1]{\ensuremath{\tilde{W}_{#1}}}
\title{Generalized Maass Wave Forms}
\author{T.~M\"uhlenbruch}
\address{Department of Mathematics and Computer Science, \href{http://www.fernuni-hagen.de}{FernUniversit\"at in Hagen}, 58084 Hagen, Germany}
\email{\href{mailto:tobias.muehlenbruch@fernuni-hagen.de}{tobias.muehlenbruch@fernuni-hagen.de}}
\author{W.~Raji}
\address{\href{http://www.aub.edu.lb/fas/math/}{Department of Mathematics}, \href{http://www.aub.edu.lb/}{American University of Beirut}, Beirut, Lebanon}
\email{\href{mailto:wr07@aub.edu.lb}{wr07@aub.edu.lb}}
\date{\today}
\keywords{Generalized Maass waveforms, Generalized modular forms,
Vector valued modular forms}
\numberwithin{equation}{section}    
\begin{document}

\begin{abstract}
We initiate the study of generalized Maass wave forms, those Maass wave forms for which the multiplier system is not necessarily unitary.
We then prove some basic theorems inherited from the classical theory of modular forms with a generalization of some examples from the classical theory of Maass forms.
\end{abstract}


\maketitle


\section{Introduction}
\label{A}
\par The space of generalized modular forms of integer weights arise
naturally in rational conformal field theory or the theory of vertex
operator algebras \cite{DLM, Z}.  Those are meromorphic functions
defined on the upper half plane that satisfy the transformation law
under subgroups of finite index in the full modular group same as
classical modular forms with a difference that the group multiplier
appearing in the transformation law does not necessarily have
absolute value one.

\par On the other hand, Maass wave forms are real analytic functions
invariant under the action of subgroups of the full modular group,
are eigenfunctions of the Laplacian operator and at most grow like
polynomials at the cusps.  Maass wave forms connects to several
areas like $L$-series \cite{Ma83}, representation theory \cite{Bo97,
Bu97} and other connections to Artin billiard and associated
transfer operators \cite{Ma91}.

\par In this paper, we initiate the study of generalized Maass
wave forms, give basic properties and definitions and extend some
theorems from the theory of classical modular forms. We shall show
that we can construct Maass wave forms from generalized modular
forms analogous to the construction of Maass wave forms from
classical modular forms. We also construct Eisenstein series and
Poincar\'e series associated to generalized Maass wave forms.  We
continue to study the analytic properties of the mentioned forms and
in particular the Whittaker-Fourier expansion and the Maass
operators.

\par In Lemma \ref{B3.1}, we show that if we have a generalized Maass
wave form on a subgroup of the full modular group and this same form
is a classical Maass wave form of a smaller group contained in the
subgroup, then the generalized Maass wave form in classical on the
bigger group.

\par We then introduce vector valued Maass wave forms that might help in establishing the
Eichler cohomology of generalized Maass wave forms in future work.
Going to the vector valued case creates an easier tool to deal with
integral transforms associated to the periods of Eichler integrals.

\section{Generalized Maass Wave Forms}
\label{B}

\subsection{Preliminaries}
\label{B1} Let $\SL{\IR}$ denote the group of $2 \time 2$ matrices
with real entries and determinant $1$. The subgroup $\SL{\IZ}\subset
\SL{\IR}$ denotes the \emph{full modular group}, that is the
subgroup matrices with integer entries. It is generated by
\begin{equation}
\label{B1.1} S = \Matrix{0}{-1}{1}{0} \quad \text{and} \quad T =
\Matrix{1}{1}{0}{1} \qquad \text{satisfying} \quad S^2 = (ST)^3 =
-\id,
\end{equation}
where $-\id\in \SL{\IZ}$ and $\id$ denotes the identity matrix. The
group $\SL{\IR}$ acts on the upper half-plane $\IH = \{ z \in \IC
\mid \im{z} >0\}$ and its boundary $\IP_\IR = \IR \cup \{\infty\}$
by \emph{fractional linear transformations}
\begin{equation}
\label{B1.2} \Matrix{a}{b}{c}{d} \, z  :=
\begin{cases}
\frac{az+b}{cz+d} &\text{if } z \neq -\frac{d}{c}\\
\frac{a}{c} &\text{if } z = -\frac{d}{c}.
\end{cases}
\end{equation}
Moreover, we have
\begin{equation}
\label{B1.3} \im{\gamma  z} = \frac{\im{z}}{\abs{cz+d}^2} \quad
\text{and} \quad \frac{\dd}{\dd z} \gamma z = \frac{1}{(cz+d)^2}
\end{equation}
for all $\gamma= \Matrix{a}{b}{c}{d} \in \SL{\IR}$.

Let $\Gamma \subset \SL{\IZ}$ be subgroup of the full modular group
with finite index. It is known that the \emph{fundamental domain}
$\cF= \cF_\Gamma$ of $\Gamma$ in $\IH$ is a hyperbolic polygon
containing finitely many inequivalent parabolic cusps $q_1, \ldots
q_t$, $t \geq 1$. We denote the set of inequivalent cusps by
$\cC=\cC_\Gamma:= \{q_1, \ldots q_t\}$.

To each cusp $q \in \cC_\Gamma$ we denote the stabilizer of $q$ by
$\Gamma_q= \langle \gamma_q, - \id \rangle$. There exists a
\emph{scaling matrix} $g_q \in \SL{\IZ}$ such that
\begin{equation}
\label{B1.13} q = g_q \, i\infty   \qquad \text{and} \qquad
g_q^{-1} \gamma_q g_q = \Matrix{1}{l_q}{0}{1} = T^{l_q},
\end{equation}
where $l_q \in \IN$ is the \emph{width} of the cusp $q$, see
\cite[(2.1), page~40]{Iw02}, or \cite[page~5]{KM03}.

\smallskip

A \emph{multiplier} or \emph{multiplier system} $v$ compatible with
(complex) \emph{weight} $k$ is a function
\begin{equation}
\label{B1.4} v:\Gamma \to \IC_{\neq 0}
\end{equation}
such that
\begin{equation}
\label{B1.5} f(\gamma z) = v(\gamma) \,e^{ik \arg{cz+d}}\, f(z)
\end{equation}
allows non-zero solutions $f$. We call a multiplier system $v$
\emph{ weakly parabolic} if
\begin{equation}
\label{B1.9} \abs{v(\gamma)} = 1 \qquad \text{for all parabolic }
\gamma \in \Gamma,
\end{equation}
as in \cite[Equation~(5)]{KR10}.

\begin{remark}
\label{B1.11} We use the convention
\begin{equation*}
w^k =\abs{w}^k \, e^{ik \arg{w}}
\end{equation*}
with $\arg{z} \in (-\pi,\pi]$ for all $z \in \IC_{\neq 0}$ to
determine the $k^\text{th}$ power in \eqref{B1.5}.
\end{remark}

\begin{remark}
\label{B1.8} Condition~\eqref{B1.5} implies in particular that  $v$
satisfies the relation
\begin{equation}
\label{B1.6} v(\gamma \delta) \, e^{ik \arg{c_{\gamma\delta} z+
d_{\gamma \delta}}} = v(\gamma)v(\delta) \,e^{ik \arg{c_\gamma(
\delta z)+d_\gamma}} e^{ik \arg{c_\delta z+d_\delta}}
\end{equation}
for all $\gamma, \delta \in \Gamma$ and $z \in \IH$. In particular
\eqref{B1.6} implies
\begin{equation}
\label{B1.7} v(-\id) = e^{-ik} \qquad(\text{if } -\id \in \Gamma).
\end{equation}
\end{remark}

\smallskip

We also introduce the \emph{slash-action} as notation.
For $\gamma=\Matrix{a}{b}{c}{d} \in \SL{\IR}$, $k \in \IC$ and $f$
be a function on $\IH$ we define
\begin{equation}
\label{B1.10} \left(f\big|_k \gamma \right) (z) := e^{-ik
\arg{cz+d}}\, f(\gamma \,z) \qquad \text{for all $z \in \IH$}.
\end{equation}
For example Equation \eqref{B1.5} reads as $f\big|_k \gamma  =
v(\gamma) \, f$.

\subsection{Classical and Generalized Maass Wave Forms}
\label{B2}
We briefly recall Maass wave forms.
\begin{definition}
\label{B2.1} Let $\Gamma\subset \SL{\IZ}$ be subgroup with finite
index and $v: \Gamma \to \IC_{\neq 0}$ a unitary multiplier system
compatible with the real weight $k$. A \emph{classical Maass wave
form} of weight $k$, multiplier $v$ for the
group $\Gamma$ is a real-analytic function $u:\IH \to \IC$
satisfying
\begin{enumerate}
\item \label{B2.1.1}
$u\big|_k\gamma = v(\gamma) \, u$ for all $\gamma \in \Gamma$,
\item \label{B2.1.2}
$u$ is an eigenfunction of the Laplace operator $\Delta_k$ with eigenvalue
$\lambda \in \IR$, i.e., $\Delta_k u = \lambda \, u$ with $z= x+iy \in \IH$ and
\begin{equation}
\label{B2.2}
\Delta_k = -y^2 \left(\partial^2_x + \partial^2_y \right) + ik y \partial_x,
\end{equation}
\item \label{B2.1.3}
$u$ satisfies the growth condition $u(g_q\,z) = \OO{y^c}$ at each
cusp $q \in \cC_\Gamma$ as $y \to \infty$ for some $c \in \IR$ with
$g_q$ beeing the associated scaling matrix in \eqref{B1.13}.
\end{enumerate}
We denote the space of classical Maass wave form by
$\cM(\Gamma,k,v,\lambda)$.

$u$ is called a \emph{classical Maass cusp form} if $u$ satisfies
the stronger growth condition $u(g_q\,z) = \OO{y^c}$ at each cusp $q
\in \cC_\Gamma$ as $y \to \infty$ for all $c \in \IR$.
\end{definition}

Maass has proved in \cite[Theorem 28]{Ma83} that the space
$\cM(\Gamma,k,v,\lambda)$ of Maass wave forms is finite dimensional.

\medskip

Generalized Maass wave forms still keep essentially the properties
\ref{B2.1.1} and \ref{B2.1.2} of Definition~\ref{B2.1}. However we
remove the condition that the multiplier system is unitary and we
weaken the growth condition. This leads to the following
\begin{definition}
\label{B2.3} Let $\Gamma\subset \SL{\IZ}$ be subgroup with finite
index and $v: \Gamma \to \IC_{\neq 0}$ a multiplier system
compatible with the complex weight $k$.  A \emph{generalized Maass
wave form} of weight $k$, multiplier $v$ for
the group $\Gamma$ is a real-analytic function $u:\IH \to \IC$
satisfying
\begin{enumerate}
\item \label{B2.3.1}
$u\big|_k\gamma = v(\gamma) \, u$ for all $\gamma \in \Gamma$,
\item \label{B2.3.2}
$u$ is an eigenfunction of $\Delta_k$ with eigenvalue $\lambda \in \IC$, i.e., $\Delta_k u = \lambda \, u$,
%
\item \label{B2.3.3}
$u$ satisfies the growth condition $u(g_q\,z) = \OO{e^{cy}}$ in each
cusp $q \in \cC_\Gamma$ as $y \to \infty$ for some $c \in \IR$ with $g_q$ given in \eqref{B1.13}.
\end{enumerate}
We denote the space of generalized Maass wave form by
$\gM(\Gamma,k,v,\lambda)$.
\end{definition}

\subsection{A Basic Lemma}
\label{B3}

Similar to \cite[Lemma~3]{KM03} we have the following result.

\begin{lemma}
\label{B3.1} Suppose $u:\IH \to \IC$ is a classical Maass wave form
form with respect to $(\Gamma, k, v, \lambda)$, with $\Gamma$ of
finite index in $\SL{\IZ}$. (That means that $k \in \IR$ and
$\abs{v(\gamma)}=1$ for all $\gamma \in \Gamma$.\@) Suppose further
that $u$ is a generalized Maass wave form with respect to
$(\Gamma^\star, k, v^\star, \lambda)$, where $\Gamma \subset
\Gamma^\star \subset \SL{\IZ}$. (Note that $v^\star = v$ on
$\Gamma$.\@)

Then $u$ is already a classical Maass form with respect to
$(\Gamma^\star, k, v^\star, \lambda)$. That is to say: $k \in \IR$
and $\abs{ v^\star(\gamma)} = 1$ for all $\gamma \in \Gamma$ implies
that $\abs{ v^\star(\gamma)} = 1$ for all $\gamma \in \Gamma^\star$.
\end{lemma}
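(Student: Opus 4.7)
The plan is to recognize the lemma as essentially a purely algebraic fact about the multiplier system~$v^\star$. Concretely, I will show that $|v^\star|$ is a group homomorphism $\Gamma^\star\to\IR_{>0}$, and then observe that any such homomorphism that is trivial on a finite-index subgroup must itself be trivial.

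First, I extract multiplicativity. Since $u$ is a generalized Maass wave form for $(\Gamma^\star,k,v^\star,\lambda)$, the function $v^\star$ is a multiplier system on $\Gamma^\star$ compatible with weight~$k$, and therefore satisfies the cocycle relation~\eqref{B1.6}. Taking absolute values of that identity and using that $k\in\IR$ forces every factor $e^{ik\arg{\cdot}}$ to have modulus~$1$, so the relation collapses to $|v^\star(\gamma\delta)|=|v^\star(\gamma)|\,|v^\star(\delta)|$ for all $\gamma,\delta\in\Gamma^\star$. Next, the finite index $[\SL{\IZ}:\Gamma]<\infty$ forces $n:=[\Gamma^\star:\Gamma]<\infty$ as well, so by pigeonhole on the $n+1$ cosets $(\gamma^\star)^i\Gamma$, $i=0,\ldots,n$, every $\gamma^\star\in\Gamma^\star$ has some positive power $(\gamma^\star)^N\in\Gamma$.

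Combining the two steps, with the compatibility $v^\star|_\Gamma=v$ and the classical hypothesis $|v|\equiv 1$ on $\Gamma$, I would compute
\[
|v^\star(\gamma^\star)|^N=\bigl|v^\star\bigl((\gamma^\star)^N\bigr)\bigr|=\bigl|v\bigl((\gamma^\star)^N\bigr)\bigr|=1,
\]
and take $N$th roots to conclude $|v^\star(\gamma^\star)|=1$. Since $k\in\IR$ is already part of the classical-form hypothesis and the weight does not change between the two statements, both clauses of the conclusion follow.

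I do not foresee a serious obstacle. The only conceptual subtlety lies in the multiplicativity step: relation~\eqref{B1.6} is \emph{not} the usual homomorphism identity $v(\gamma\delta)=v(\gamma)v(\delta)$, and one really needs $k\in\IR$ to pass from the cocycle to the multiplicativity of $|v^\star|$. It is also worth noting that neither the eigenvalue equation~\ref{B2.3.2} nor the growth condition~\ref{B2.3.3} enters the argument directly; they are used only implicitly, through the existence of a non-zero $u$ witnessing that $v^\star$ is a genuine multiplier system on $\Gamma^\star$, so that~\eqref{B1.6} is available.
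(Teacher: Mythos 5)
Your argument for the unitarity of $v^\star$ is correct and is in essence the same as the paper's, just packaged more cleanly: both rest on the multiplicativity of $\abs{v^\star}$ (which, as you rightly stress, needs $k\in\IR$ so that every factor $e^{ik\arg{\cdot}}$ in \eqref{B1.6} has modulus one) together with the finiteness of $[\Gamma^\star:\Gamma]$. The paper phrases the finiteness input as ``$\abs{v^\star}$ takes only finitely many values on $\Gamma^\star$, whereas an element with $\abs{v^\star(\gamma^\star)}\neq 1$ would produce infinitely many values along its powers''; you instead observe by pigeonhole that some positive power $(\gamma^\star)^N$ lies in $\Gamma$ and read off $\abs{v^\star(\gamma^\star)}^N=1$ directly. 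These are two renderings of one idea, and yours is arguably the tidier.

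There is, however, a genuine gap: the lemma asserts that $u$ \emph{is a classical Maass wave form} for $(\Gamma^\star,k,v^\star,\lambda)$, and Definition~\ref{B2.1}~(\ref{B2.1.3}) requires the polynomial bound $u(g_q\,z)=\OO{y^c}$ at every cusp $q\in\cC_{\Gamma^\star}$, whereas the generalized-form hypothesis on $\Gamma^\star$ only provides the exponential bound $\OO{e^{cy}}$. You explicitly set the growth condition aside as not entering the argument, but unitarity of $v^\star$ alone does not upgrade exponential growth to polynomial growth; the bound has to be transferred from the $\Gamma$-data. The paper devotes the second half of its proof to exactly this point: every cusp of $\Gamma^\star$ is ($\Gamma^\star$-equivalent to) a cusp of $\Gamma$, at which $u$, being classical for $\Gamma$, already satisfies $\OO{y^c}$, and the transformation law carries this bound to all $\Gamma^\star$-cusps. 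The step is short, but it is part of the claim and must appear in your proof.
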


We adapt the proof of \cite[Lemma~3]{KM03} to our situation.

\begin{proof}
From the consistency condition \eqref{B1.6} and the fact that
$\Gamma$ has finite index in $\Gamma^\star$, it follows easily that
$v$ assumes only finitely many distinct values on $\Gamma^\star$. On
the other hand, if there were $\gamma^\star \in \Gamma^\star$ such
that $\abs{v(\gamma^\star)} \neq 1$, then by \eqref{B1.6} the set
$\left\{ \abs{v^\star\big((\gamma^\star)^n\big)}; \, n \in \IZ
\right\}$ would contain infinite many distinct values for
$\abs{v^\star}$ on $\Gamma^\star$.

We still have to check that the growth conditions of $u$ are in fact
as in Definition ~\ref{B2.1} (\ref{B2.1.3}). Since $\Gamma \subset
\Gamma^\star$ the set of cusps satisfy $\cC_{\Gamma^\star} \subset
\cC_\Gamma$. The assumptions of the lemma imply that $u$ satisfies
the stronger growth condition in Definition~\ref{B2.1} in each cusp
of $\cC_{\Gamma^\star}$. The additional cusps in $\cC_\Gamma
\smallsetminus \cC_{\Gamma^\star}$ can be transformed into a cusp
in $\cC_{\Gamma^\star}$ by an element in $\Gamma^\star$. (These
additional cusps are $\Gamma^\star$-equivalent to cusps in
$\cC_{\Gamma^\star}$.\@) Hence the the stronger growth condition of
Definition~\ref{B2.1} is also valid for these cusps.

Hence $u$ is a classical Maass wave form since it satisfies Definition~\ref{B2.1}.
\end{proof}

\subsection{Maass Operators}
\label{D1}
We denote by $\EE_k^\pm$ the differential operators
\begin{equation}
\label{D1.1} \EE^\pm_{k} = \pm 2iy\partial_x +2y\partial_y \pm k
\end{equation}
acting on real-analytic functions.
Using $\partial_z = \frac{1}{2}\partial_x - \frac{i}{2} \partial_y$ and
${\partial}_{\bar{z}} =\frac{1}{2}\partial_x + \frac{i}{2}
\partial_y$ gives
\begin{equation}
\label{D1.2} \EE^+_k = 4iy\partial_z +k \quad \text{and} \quad
\EE^-_k = -4iy\partial_{\bar{z}} -k.
\end{equation}

\begin{remark}
\label{D1.5} The Maass operators are named after Hans Maass. He
studied operators $\mathrm{K}_k$ and $\Lambda_k$, see e.g.\
\cite[page 177]{Ma83}, which are essentially $\EE_k^\pm$.
\end{remark}

As shown for example in \cite[\S6.1.4]{Br94}, the operators
$\EE^\pm_{k\mp 2} \EE^\mp_k$ are related to $\Delta_k$ by
\begin{equation}
\label{D1.4} \Delta_k = -\frac{1}{4} \EE^+_{k- 2} \EE^-_k
-\frac{k(k- 2)}{4} = -\frac{1}{4} \EE^-_{k+ 2} \EE^+_k -\frac{k(k+
2)}{4}.
\end{equation}
Direct calculations show that the slash-action commutes
with the Laplace operator
\begin{equation}
\label{D1.8} \Delta_{k}\big( u\big|_{k} \gamma \big) =
\big(\Delta_{k} u\big)\big|_{k} \gamma \qquad(\gamma \in \SL{\IR}),
\end{equation}
and interacts as follows with the Maass operators
\begin{equation}
\label{D1.9} \EE^\pm_{k}\big( u\big|_{k} \gamma \big) =
\big(\EE^\pm_{k}u\big)\big|_{k\pm 2} \gamma \qquad(\gamma \in
\SL{\IR}),
\end{equation}
for all $k\in\IC$ and smooth $u:\IH \to \IC$.

\begin{lemma}
\label{D1.6} $\EE_k^\pm$ map generalized Maass wave forms of weight
$k$ to generalized Maass forms of weight $k\pm2$:
\begin{equation}
\label{D1.3} \EE^\pm_{k}: \gM(\Gamma,k,v,\lambda) \to
\gM(\Gamma,k\pm 2,v,\lambda).
\end{equation}
\end{lemma}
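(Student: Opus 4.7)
The plan is to verify the three defining conditions of Definition~\ref{B2.3} for $\EE_k^\pm u$, starting from $u\in\gM(\Gamma,k,v,\lambda)$. Conditions \ref{B2.3.1} and \ref{B2.3.2} will be essentially algebraic consequences of the compatibility formulas already recorded in \eqref{D1.8}, \eqref{D1.9} and \eqref{D1.4}; the real content is the cuspidal growth condition \ref{B2.3.3}, which I expect to be the main obstacle.

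For the transformation law, I apply \eqref{D1.9} to the identity $u\big|_k \gamma = v(\gamma)\,u$ and pull the scalar $v(\gamma)$ past $\EE_k^\pm$, giving $(\EE_k^\pm u)\big|_{k\pm 2}\gamma = v(\gamma)\,\EE_k^\pm u$ for every $\gamma\in\Gamma$. For the eigenvalue property, I combine both factorizations in \eqref{D1.4}: the identity $\EE_{k+2}^- \EE_k^+ = -4\Delta_k - k(k+2)$ substituted into $\Delta_{k+2} = -\tfrac{1}{4}\EE_k^+\EE_{k+2}^- - \tfrac{k(k+2)}{4}$ yields the operator commutation $\Delta_{k+2}\,\EE_k^+ = \EE_k^+\,\Delta_k$, so $\Delta_{k+2}(\EE_k^+ u) = \EE_k^+(\lambda u) = \lambda\,\EE_k^+ u$. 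The case of $\EE_k^-$ is entirely analogous, using the other factorization in \eqref{D1.4} at weight $k$ together with $\Delta_{k-2} = -\tfrac{1}{4}\EE_k^-\EE_{k-2}^+ - \tfrac{(k-2)k}{4}$ to derive $\Delta_{k-2}\,\EE_k^- = \EE_k^-\,\Delta_k$.

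For the growth condition, fix a cusp $q\in\cC_\Gamma$ with scaling matrix $g_q$ and set $\tilde u := u\big|_k g_q$. By \eqref{D1.9} and \eqref{D1.8}, $(\EE_k^\pm u)\big|_{k\pm 2} g_q = \EE_k^\pm \tilde u$, the function $\tilde u$ still satisfies $\Delta_k \tilde u = \lambda\,\tilde u$, and the bound $\tilde u(z) = \OO{e^{cy}}$ as $y \to \infty$ is inherited. From \eqref{D1.1}, controlling $\EE_k^\pm \tilde u$ reduces to bounding $y\,\partial_x \tilde u$ and $y\,\partial_y \tilde u$, so what is really required is an exponential bound on the first partial derivatives of $\tilde u$ at points with large imaginary part. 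Since $\tilde u$ is a real-analytic eigenfunction of the elliptic operator $\Delta_k$ (whose coefficients, after dividing through by $-y^2$, are $\OO{1/y}$ on a disc of radius $y/2$ around any $z$ with $\im z = y$ large), interior elliptic estimates bound $\abs{\partial_x \tilde u(z)}$ and $\abs{\partial_y \tilde u(z)}$ by $\OO{y^{-1} e^{c' y}}$ for some $c' > c$. I expect this regularity step to be the main obstacle; a slicker alternative fitting the Whittaker--Fourier framework developed later in the paper is to expand $\tilde u$ at $i\infty$ as $\tilde u(z) = \sum_n a_n(y)\,e^{2\pi i(n+\kappa)x/l_q}$, so that $\EE_k^\pm$ acts term-by-term and one reads off exponential bounds on the new coefficients from the known bounds on $a_n$ and $a_n'$. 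Finally, the weight factor $e^{-i(k\pm 2)\arg{c_q z + d_q}}$ converting $\EE_k^\pm \tilde u$ back to $(\EE_k^\pm u)(g_q z)$ has bounded modulus as $y\to\infty$, yielding the required bound $(\EE_k^\pm u)(g_q z) = \OO{e^{c'' y}}$.
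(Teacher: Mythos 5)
Your proposal is correct and follows the same route as the paper: the transformation law comes from \eqref{D1.9} applied to $u\big|_k\gamma = v(\gamma)\,u$, and the eigenvalue property from the commutation relation $\EE_k^\pm\Delta_k = \Delta_{k\pm2}\EE_k^\pm$ extracted from the two factorizations in \eqref{D1.4}. The one place you go beyond the paper is the growth condition: the paper merely asserts that \eqref{D1.1} makes the growth condition ``compatible'' with $\EE_k^\pm$, whereas you correctly observe that bounding $y\,\partial_x \tilde u$ and $y\,\partial_y \tilde u$ requires an exponential bound on first derivatives that does not follow formally from the bound on $\tilde u$ alone, and your interior elliptic estimate (or, alternatively, term-by-term differentiation of the Whittaker--Fourier expansion of Section~\ref{D3}) supplies exactly the regularity step the paper leaves implicit.
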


\begin{proof}
Using \eqref{D1.4}, we get the commutation relation
\begin{equation}
\label{D1.7}
E^{\pm}_k \Delta_k=\Delta_{k\pm2}E^{\pm}_k.
\end{equation}
As a result, the eigenfunctions of $\Delta_k$ are mapped to the
eigenfunctions of $\Delta_{k\pm2}$ by $E^{\pm}_k$.
\eqref{D1.9} shows that the group action commute with $E^{\pm}_k$ and
\eqref{D1.1} shows that the growth condition of the generalized
Maass waveform is compatible with $E^{\pm}_k$.
\end{proof}

\section{Some Examples}
\label{C}

\subsection{Maass Wave Forms}
\label{C1} We consider \emph{Maass wave forms} as for example as
introduced in \cite{Iw02}. These are real-analytic functions
$u:\IH \to \IC$ which satisfy
\begin{enumerate}
\item $u(g \, z) = u (z)$ for all $g \in \Gamma$ and $z \in \IH$,
\item $ \Delta_0 u = \lambda u$, with non-negative real eigenvalue $\lambda$, and
\item $u(g_q\,z) = \OO{y^{c}}$ in each cusp $q \in \cC_\Gamma$ as $y= \im{z} \to \infty$ for some $c \in \IR$.
\end{enumerate}
Maass wave forms are obviously also generalized Maass wave forms for
weight $0$ and trivial multiplier.

Maass wave forms with real weight, as considered in \cite{Br94} and
\cite{Mu03}, are also generalized Maass wave forms for real weight
and unitary multiplier.

\subsection{Generalized Modular Forms}
\label{C2} Generalized modular forms are introduced a few years
back. Following \cite{KM03}, a generalized modular form $F$ is a
holomorphic function $F:\IH \to \IC$ with a left finite
Fourier-expansion
\begin{equation}
\label{C2.1} F(z) = \sum_{n=-m}^\infty a_n \, e^{2\pi i n z}
\end{equation}
at each cusp and it satisfies the transformation property
\begin{equation}
\label{C2.2} (cz+d)^{-k} \, F(\gamma z) = v(\gamma) \, F(z) \qquad
\text{for all } \gamma = \Matrix{\star}{\star}{c}{d} \in \Gamma,
\end{equation}
see \cite[\S2, Definition]{KM03}.
Taking
\begin{equation}
\label{C2.3} u(z):= \im{z}^\frac{k}{2} \, F(z),
\end{equation}
$F$ induces a generalized Maass wave form $u \in \gM\left(\Gamma,k,\rho,\frac{k}{2}\left(1-\frac{k}{2}\right) \right)$.

Indeed we have
{\allowdisplaybreaks
\begin{align*}
u(\gamma z)
&= \im{\gamma z}^\frac{k}{2} \, F(\gamma z)
 = \left( \frac{\im{z}}{\abs{cz+d}^2}\right)^\frac{k}{2} \, F(\gamma z)   \qquad \text{using \eqref{B1.3}} \\
&= \left( \frac{\im{z}}{\abs{cz+d}^2}\right)^\frac{k}{2} \, (cz+d)^k \, v(\gamma) \, F(z)    \qquad \text{using \eqref{C2.2}} \\
&= \left( \frac{(cz+d)}{\abs{cz+d}}\right)^k  \, v(\gamma) \, \im{z}^\frac{k}{2} \, F(z) \\
&= e^{ik \arg{cz+d}} \, v(\gamma) \, u(z)  \qquad \text{using $\frac{z}{\abs{z}} = e^{i \arg{z}}$}
\end{align*}
}
for each $\gamma \in \Gamma$ and
\begin{align*}
\Delta_k u (z)
&=  \left(-\frac{1}{4} \EE^+_{k-2} \EE^-_k  -\frac{k(k-2)}{4}\right) \left(\im{z}^\frac{k}{2} \, F(z)\right) \\
&=  0 - \frac{k(k-2)}{4} \left(\im{z}^\frac{k}{2} \, F(z)\right)
 = \frac{k}{2}\left(1-\frac{k}{2}\right) \, u(z)
\end{align*}
using \eqref{D1.4}, and the property that $u$ in \eqref{C2.3} lies
in the kernel of $\EE^-_k$:
\begin{align*}
\EE^-_k \, \left(\im{z}^\frac{k}{2} \, F(z)\right) &=
\bigg(-4iy\partial_{\bar{z}} -k \bigg) \left(\im{z}^\frac{k}{2} \, F(z)\right) \qquad \text{using \eqref{D1.2}}\\
&=
-4iy \partial_{\bar{z}}\left(\im{z}^\frac{k}{2} \, F(z)\right) - k \left(\im{z}^\frac{k}{2} \, F(z)\right) \\
&= - 4iy \bigg(\frac{i}{2} \partial_y \im{z}^\frac{k}{2} \bigg) \,
F(z) \;
- 4iy \im{z}^\frac{k}{2} \, \bigg(\partial_{\bar{z}}F(z)\bigg) \\
&\quad
- k \,\im{z}^\frac{k}{2} \, F(z) \\
&= 2y \, \frac{k}{2} \im{z}^{\frac{k}{2}-1} \, F(z) \; -0 \;- k \,\im{z}^\frac{k}{2} \, F(z)
 = 0.
\end{align*}
$u$ satisfies the growth property in Definition~\ref{B2.3} (\ref{B2.3.3}) since the generalized modular form $F$ has a left finite Fourier-expansion.

This generalizes the example of holomorphic modular forms in
\cite[page~6]{Mu03}.

\subsection{Eisenstein Series and Poincar\'e Series}
\label{C3} We use a method of constructing generalized Maass
waveforms similar to the classical construction as in \cite{BOR08}.
To twist the definition of the real analytic Eisenstein series and
Poincar\'e series by introducing a non-unitary multiplier systetem
inside the sum. However, this construction will definitely affect
the convergence of the series.

\begin{definition}
\label{C3.1}
Let $v$ be a multiplier system for $\Gamma$
which is compatible to weight $k$. For $F:\IH \to \IC$ an
eigenfunction of $\Delta_k$ define formally the \emph{generalized
Poincar\'e Series} $P(z)$ by the formal series
\begin{equation}
\label{mult0}
\begin{split}
P(z) &=
\sum_{\gamma \in \Gamma_\infty \backslash \Gamma}   v(\gamma)^{-1} \, \left( F\big|_k \gamma \right) (z) \\
&= \sum_{\gamma \in \Gamma_\infty \backslash \Gamma} v(\gamma)^{-1}
\, e^{-ik \arg{c_\gamma z+ d_\gamma}} \, F(\gamma z)
\end{split}
\end{equation}
where $\gamma=\begin{pmatrix} \cdot & \cdot \\ c_\gamma & d_\gamma
\end{pmatrix}$ runs to a complete set of coset representatives for
$\Gamma_\infty \backslash \Gamma$.
\end{definition}
\smallskip

It is a straight forward calculation to show that $P$ formally
satisfies properties (\ref{B2.3.1}) and (\ref{B2.3.2}) of Definition~\ref{B2.3}, provided
that the series converges absolutely. Here, $|v|$ is not necessarily $1$.

Assume for the moment that
\begin{equation}
\label{hol} F(z)= \im{z}^\frac{k}{2} \, h(z)
\end{equation}
with $h: \IH \to \IC$ a bounded holomorphic function. Recall from
\cite[Lemma~6]{Kn74} that
\begin{equation}
\label{mult1} |v(\gamma )| \leq K \,\mu(\gamma)^\alpha
\end{equation}
where $K$ is a positive constant, $\alpha$ is another constant
depending on the modulus of the multiplier system at the generators
of $\Gamma$ and $\mu(\gamma) =a^2 + b^2 + c^2 + d^2$ where $a, b, c,
d$ are the entries of $\gamma$. Recall also that there exists a
constant $K_1$ such that
\begin{equation}
\label{mult2} \mu(\gamma) \leq K_1 (c^2 + d^2 )
\end{equation}
for all $\gamma \in \Gamma_\infty \backslash \Gamma$. Moreover, we
have from \cite[Lemma~4]{Kn74} the following inequality
\begin{equation}
\label{mult3} c^2 +d^2 \leq \frac{20}{3} |cz + d|^2.
\end{equation}
Combining \eqref{mult2} and \eqref{mult3} explains the absolute
convergence of the series in \eqref{mult0} for large $k$ with $k >
2\alpha + 1$:
\begin{align*}
P(z) &=
\sum_{\gamma \in \Gamma_\infty \backslash \Gamma}   v(\gamma)^{-1} \, e^{-ik \arg{c_\gamma z+ d_\gamma}} \, \im{\gamma z}^\frac{k}{2} h(\gamma z) \\
&= \im{z}^\frac{k}{2} \sum_{\gamma \in \Gamma_\infty \backslash
\Gamma}  v(\gamma)^{-1} \, \frac{h(\gamma z)}{\mid c_\gamma z +
d_\gamma \mid^k}.
\end{align*}

\smallskip

Similarly, taking
\begin{equation}
\label{eisen} F(z)= \im{z}^{\frac{1}{2}+ \nu}
\end{equation}
defines a \emph{generalized Eisenstein series} for $\re{\nu} >
\max\{\alpha,0\}$ large enough for weight $0$ and a compatible
multiplier $v$. The absolute convergence of the series $P$ in
\eqref{mult0} follows from
\begin{align*}
P(z) &=
\sum_{\gamma \in \Gamma_\infty \backslash \Gamma}   v(\gamma)^{-1} \, \im{\gamma z}^{\frac{1}{2}+\nu} \\
&= \im{z}^{\frac{1}{2}+\nu} \sum_{\gamma \in \Gamma_\infty
\backslash \Gamma}  v(\gamma)^{-1} \, \mid c_\gamma z + d_\gamma
\mid^{-1-2\nu}.
\end{align*}

\smallskip

\begin{remark}
\label{C3.2}
Using $F(z) = \tilde{M}_{\frac{k}{2},\nu}\big(4
\pi i|n| \im{z} \big) \, e^{2 \pi in \re{z}}$ with
$\re{\nu} > \max\{\alpha,0\}$ and following the arguments in \cite[\S 5.1]{Br81} defines another type of Poincar\'e series.
\end{remark}

\section{Fourier-Whittaker Expansions}
\label{D}

\subsection{Whittaker Functions}
\label{D2} Recall Whittaker's normalized differential equation
\begin{equation}
\label{D2.1} \frac{d^2}{d y^2} G(y) + \left( -\frac{1}{4} +
\frac{k}{y} + \frac{\frac{1}{4}-\nu^2}{y^2} \right) G(y) = 0
\end{equation}
for smooth functions $G: (0,\infty) \to \IC$ and $\nu \notin -\frac{1}{2} \IN$.

According to \cite[(13.14.2), (13.14.3)]{DL13}, see also
\cite[Chapter~7]{MO66}, we have two solutions $M_{k,\nu}(y)$ and
$W_{k,\nu}(y)$ with different behavior as $y \to \infty$:
\begin{align}
\label{D2.2.1}
M_{k,\nu}(y) &\sim \frac{\Gamma(1+2\nu)}{\Gamma\left(\frac{1}{2}+\nu+k\right)} \, e^{\frac{1}{2} y} \, y^{-k} \qquad \text{and} \\
\label{D2.2.2} W_{k,\nu}(y) &\sim e^{-\frac{1}{2} y} \, y^k.
\end{align}
The asymptotic behavior is valid for $k-\nu \notin \left\{
\frac{1}{2},\frac{3}{2}, \frac{5}{2}, \ldots \right\}$, see
\cite[(13.14.20), (13.14.21)]{DL13}. These functions satisfy also
the recurrence relations \cite[(13.15.1), (13.15.11)]{DL13} and
differentiation relations \cite[(13.15.17), (13.15.20) and
(13.15.23), (13.15.26)]{DL13} and \cite[\S7.2.1, page 302]{MO66}.

We consider a modified pair of solutions.
\begin{definition}
\label{D2.5}
For $k-\nu \notin -\frac{1}{2} + \IN$ we define for all $y \in (0,\infty)$:
\begin{align}
\label{D2.5.1}
\WW{k,\nu}(y) &:= W_{k,\nu}(y) \qquad \text{and}\\
\label{D2.5.2} \MM{k,\nu}(y)
&:=\frac{\Gamma\left(\frac{1}{2}+\nu-k\right)}{\Gamma(1+2\nu)} \,
M_{k,\nu}(y).
\end{align}
\end{definition}

The definition makes sense for $\nu \in -\frac{1}{2} \IN$ since
Buchholts function
\begin{equation*}
\mathcal{M}_{k,\nu}:y \mapsto \frac{1}{\Gamma(1+2\nu)} \,
M_{k,\nu}(y), \ \ \ y >0,
\end{equation*}
remains well defined at these values of $\nu$, see \cite[\S7.1.1,
page 297]{MO66}.

\medskip
The following lemma summarizes the action of the Maass operators
$\EE^\pm_k$ on $\WW{\frac{k}{2},\nu}$:
\begin{lemma}
\label{D2.3} Let $k,\nu \in \IC$ such that $k \pm \nu \notin
\frac{1}{2}+\IZ$ and $\lambda = \frac{1}{4} -\nu^2$.
We have for $n >0$
\[
\EE^+_k \,\WW{\frac{k}{2},\nu}(4\pi ny)\,e^{2\pi inx} = -2
\WW{\frac{k+2}{2},\nu}(4\pi ny)\,e^{2\pi inx}
\]
and
\[
\EE^-_k\, \WW{ \frac{k}{2},\nu}(4\pi ny)\,e^{2\pi inx} = \left(
\frac{k(k-2)}{2}+2\lambda\right)
     \WW{\frac{k-2}{2},\nu}(4\pi ny)\,e^{2\pi inx}.
\]
For $n<0$ we have
\[
\EE^+_k \,\WW{-\frac{k}{2},\nu}(4\pi |n|y)\,e^{2\pi inx} = \left(
\frac{k(k+2)}{2} + 2\lambda \right)
     \WW{-\frac{k+2}{2},\nu}(-4\pi |n|y)\,e^{2\pi inx}
\]
and
\[
\EE^-_k \,\WW{-\frac{k}{2},\nu}(-4\pi |n| y)\,e^{2\pi inx} = -2
\WW{-\frac{k-2}{2},\nu}(-4\pi |n| y)\,e^{2\pi inx}.
\]
\end{lemma}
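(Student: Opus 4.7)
The plan is to reduce each of the four identities to a single application of a standard first-order shift relation for the Whittaker function $W_{\kappa,\nu}$, since the bookkeeping has the same shape across the four cases and only the sign of $n$ and the choice of $\pm$ differ.

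First, I would compute $\EE^\pm_k$ applied to a generic product $G(u)\,e^{2\pi i n x}$, where $u := 4\pi|n|y$ and $G$ is smooth on $(0,\infty)$. Using \eqref{D1.1} and the chain rule, the $\partial_x$-term contributes $\pm 2iy\cdot 2\pi i n = \mp \sign{n}\,u$, while $2y\,\partial_y$ applied to $G(u)$ becomes $2u\,G'(u)$. Pulling out $e^{2\pi i n x}$ and an overall factor of $2$, this yields
\begin{equation*}
\EE^\pm_k\bigl(G(u)\,e^{2\pi i n x}\bigr) = 2\Bigl[u\,G'(u) \mp \tfrac{\sign{n}}{2}\,u\,G(u) \pm \tfrac{k}{2}\,G(u)\Bigr]\,e^{2\pi i n x}.
\end{equation*}

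Second, I would invoke the two standard Whittaker shift relations (cf.\ \cite[(13.15.20), (13.15.26)]{DL13} and \cite[\S7.2.1]{MO66}),
\begin{equation*}
\bigl(u\,\tfrac{d}{du} - \tfrac{u}{2} + \kappa\bigr)\,W_{\kappa,\nu}(u) = -W_{\kappa+1,\nu}(u),
\end{equation*}
\begin{equation*}
\bigl(u\,\tfrac{d}{du} + \tfrac{u}{2} - \kappa\bigr)\,W_{\kappa,\nu}(u) = \bigl[(\kappa-\tfrac{1}{2})^2 - \nu^2\bigr]\,W_{\kappa-1,\nu}(u).
\end{equation*}
In each of the four cases the bracket above matches the left-hand side of exactly one of these two identities: the sign of the $u/2$ term selects raising versus lowering, and $\kappa$ is forced to be $\pm k/2$, matching the choice of $\WW{\pm k/2,\nu}$ in the statement ($\kappa = k/2$ for $n>0$, $\kappa = -k/2$ for $n<0$). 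The raising relation produces the constant $-2$, accounting for the first and fourth identities in the lemma, while the lowering relation produces $2\bigl[(\kappa-\tfrac{1}{2})^2-\nu^2\bigr]$. For $\kappa = \pm k/2$ this simplifies via $\lambda = \tfrac{1}{4}-\nu^2$ to $\tfrac{k(k\mp 2)}{2} + \tfrac{1}{2} - 2\nu^2 = \tfrac{k(k\mp 2)}{2} + 2\lambda$, matching the remaining two coefficients in the lemma.

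The main obstacle is purely bookkeeping across the four sign cases and the elementary algebraic identity relating $(\kappa\mp\tfrac{1}{2})^2-\nu^2$ to $\tfrac{k(k\mp 2)}{2}+2\lambda$; no deeper difficulty arises once the shift relations are granted. The hypothesis $k\pm\nu\notin \tfrac{1}{2}+\IZ$ is not needed in the algebraic manipulation itself; it appears only through Definition~\ref{D2.5} and the asymptotics \eqref{D2.2.2}, to guarantee that every Whittaker function occurring on either side is well-defined and represents the intended decaying solution at infinity.
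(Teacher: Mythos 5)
Your proof is correct and is precisely the ``direct calculations using the recurrence formulas in \cite[\S7.2.1, page~302]{MO66}'' that the paper's one-line proof defers to: the reduction of $\EE^\pm_k$ to the operator $u\frac{d}{du}\mp\frac{\sign{n}}{2}u\pm\frac{k}{2}$, the matching with the two $W$-shift relations, and the algebra $2\bigl[(\pm\tfrac{k}{2}-\tfrac12)^2-\nu^2\bigr]=\tfrac{k(k\mp2)}{2}+2\lambda$ all check out. (The minus signs in the arguments $-4\pi\abs{n}y$ in the lemma's last two displays are typos in the paper; your computation correctly keeps the argument $4\pi\abs{n}y$ positive, consistent with the expansion \eqref{D3.6}.)
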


\begin{proof}
See \cite[proof of Lemma~4]{Mu03}, \cite[Table~4.1, page 63]{Br94}
or direct calculations using the recurrence formulas given in
\cite[\S7.2.1, page~302]{MO66}.
\end{proof}

Similar relations hold for the $\MM{k,\nu}$-function, using
formulas on \cite[page 302]{MO66}.

\begin{example}
\label{D2.4} We compute $\EE^+_k \, \MM{\frac{k}{2},\nu}(4\pi n
y)\,e^{2\pi inx}$ for $n>0$:
\begin{align}
\nonumber &\!\!\!\!
\EE^+_k \, \MM{\frac{k}{2},\nu}(4\pi ny)\,e^{2\pi inx} \\
\nonumber &=
\bigg( 2iy\partial_x + 2y\partial_y + k \bigg) \left(\frac{\Gamma\left(\frac{1-k}{2}+\nu\right)}{\Gamma(1+2\nu)} \, M_{\frac{k}{2},\nu}(4\pi ny)\,e^{2\pi inx} \right) \\
\nonumber &=
(k-4\pi ny) \, \frac{\Gamma\left(\frac{1-k}{2}+\nu\right)}{\Gamma(1+2\nu)} \, M_{\frac{k}{2},\nu}(4\pi ny)\,  e^{2\pi inx} \\
\label{D2.4.1} & \quad +
 2\, \frac{\Gamma\left(\frac{1-k}{2}+\nu\right)}{\Gamma(1+2\nu)} \,  \bigg( 4 \pi n y \,  M^\prime_{\frac{k}{2},\nu}(4\pi ny)\bigg) \,e^{2\pi inx} .
\end{align}
We use an identity from \cite[\S7.2.1, page 302]{MO66} to rewrite
$M^\prime_{\frac{k}{2},\nu}$ in \eqref{D2.4.1}:
\begin{equation*}
4 \pi n y \,  M^\prime_{\frac{k}{2},\nu}(4\pi ny)
=
\left(\frac{1}{2} + \frac{k}{2} + \nu \right)
M_{\frac{k+2}{2},\nu}(4 \pi n y) - \left( \frac{k}{2} - \frac{4 \pi
ny}{2} \right) M_{\frac{k}{2},\nu}(4 \pi n y).
\end{equation*}
Hence we find
\begin{align*}
& \EE^+_k \, \MM{\frac{k}{2},\nu}(4\pi ny)\,e^{2\pi inx}=
(k-4\pi ny) \, \frac{\Gamma\left(\frac{1-k}{2}+\nu\right)}{\Gamma(1+2\nu)} \, M_{\frac{k}{2},\nu}(4\pi ny)\,  e^{2\pi inx} \\
& \quad -
2 \left(\frac{1+k}{2}+\nu \right)\left(-\frac{1+k}{2}+\nu \right)\frac{\Gamma\left(\frac{1-(k+2)}{2}+\nu\right)}{\Gamma(1+2\nu)} \, M_{\frac{k+2}{2},\nu}(4 \pi n y) \\
& \quad -
  \big( k - 4 \pi ny \big)  \, \frac{\Gamma\left(\frac{1-k}{2}+\nu\right)}{\Gamma(1+2\nu)} \, M_{\frac{k}{2},\nu}(4 \pi n y) \,e^{2\pi inx}\\
&=
-2  \left(\frac{1+k}{2}+\nu \right)\left(-\frac{1+k}{2}+\nu \right)\frac{\Gamma\left(\frac{1-(k+2)}{2}+\nu\right)}{\Gamma(1+2\nu)} \, M_{\frac{k+2}{2},\nu}(4 \pi n y) \\
&= \left(\frac{k(k+2)}{2}-2\lambda \right) \,
\MM{\frac{k+2}{2},\nu}(4\pi ny)\,e^{2\pi inx}.
\end{align*}
\end{example}

\begin{lemma}
\label{D2.6} Let $k,\nu \in \IC$ such that $k \pm \nu \notin
\frac{1}{2}+\IZ$ and $\lambda = \frac{1}{4} -\nu^2$. We have for $n
>0$
\[
\EE^+_k \,\MM{\frac{k}{2},\nu}(4\pi ny)\,e^{2\pi inx} =
\left(\frac{k(k+2)}{2}-2\lambda \right)\MM{\frac{k+2}{2},\nu}(4\pi
ny)\,e^{2\pi inx}
\]
and
\[
\EE^-_k\, \MM{ \frac{k}{2},\nu}(4\pi ny)\,e^{2\pi inx} = 2
     \MM{\frac{k-2}{2},\nu}(4\pi ny)\,e^{2\pi inx}.
\]
For $n<0$ we have
\[
\EE^+_k \,\MM{-\frac{k}{2},\nu}(4\pi |n|y)\,e^{2\pi inx} = 2
     \MM{-\frac{k+2}{2},\nu}(-4\pi |n|y)\,e^{2\pi inx}
\]
and
\[
\EE^-_k \,\MM{-\frac{k}{2},\nu}(-4\pi |n| y)\,e^{2\pi inx} =
2\left(\frac{k(k-2)}{4}-\lambda\right) \MM{-\frac{k-2}{2},\nu}(-4\pi
|n| y)\,e^{2\pi inx}.
\]
\end{lemma}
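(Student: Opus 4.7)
The case $n>0$ with operator $\EE^+_k$ is already carried out in Example~\ref{D2.4}. The remaining three identities will be handled in exact parallel: substitute the definition $\EE^\pm_k=\pm 2iy\partial_x+2y\partial_y\pm k$ from \eqref{D1.1} into $\MM{\pm k/2,\nu}(4\pi|n|y)e^{2\pi inx}$ and carry out the two partial derivatives. Setting $w=4\pi|n|y$, one arrives at an expression of the form $C_1(k,w)\,\MM{\pm k/2,\nu}(w)+2w\,\MM{\pm k/2,\nu}'(w)$ times $e^{2\pi inx}$, with $C_1$ a linear polynomial in $w$ whose sign depends on the sign of $n$ and the choice of $\EE^\pm_k$.

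Next, invoke the recurrence from \cite[\S7.2.1, page~302]{MO66} that rewrites $wM'_{\kappa,\nu}(w)$ in terms of $M_{\kappa+1,\nu}$ and $M_{\kappa,\nu}$ for the $\EE^+$ cases, or in terms of $M_{\kappa-1,\nu}$ and $M_{\kappa,\nu}$ for the $\EE^-$ cases. Just as in Example~\ref{D2.4}, the $M_{\kappa,\nu}$-contribution arising from the recurrence cancels the $C_1(k,w)\,\MM$-term, leaving only a single shifted term in $M_{\kappa\pm 1,\nu}$. Converting back to $\MM{\kappa\pm 1,\nu}$ via \eqref{D2.5.2} introduces a quotient of Gamma functions that collapses to a linear factor in $k$ and $\nu$ via $\Gamma(z+1)=z\Gamma(z)$. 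Multiplying by the (also linear) coefficient coming out of the recurrence yields a polynomial of degree at most two in $\nu$ and $k$; the substitution $\nu^2=\tfrac{1}{4}-\lambda$ then reduces it to either the constant $2$ or the stated affine expression $\tfrac{k(k\pm 2)}{2}-2\lambda$.

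The principal obstacle is bookkeeping: one must choose the correct recurrence out of the several listed in \cite[\S7.2.1]{MO66} for each of the four cases, and track the sign flip in the $x$-derivative contribution when $n<0$ (together with the index change $k/2 \to -k/2$). A more conceptual shortcut for the $n>0$ pair is to derive the $\EE^-_k$ formula from the already-established $\EE^+_k$ formula via the factorization \eqref{D1.4}: apply $\EE^+_{k-2}\EE^-_k=-4\Delta_k-k(k-2)$ to $\MM{k/2,\nu}(4\pi ny)e^{2\pi inx}$, which is a $\Delta_k$-eigenfunction with eigenvalue $\lambda$, and solve for the undetermined scalar in $\EE^-_k\MM{k/2,\nu}=C\,\MM{(k-2)/2,\nu}$, having first excluded a possible $\WW{(k-2)/2,\nu}$-component by comparing the behavior at $y\to 0^+$ (where $\MM$ is regular and $\EE^-_k$ preserves this regularity, whereas $\WW$ is generically singular). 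The analogous shortcut handles the $n<0$ pair.
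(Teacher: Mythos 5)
Your main plan---direct computation from \eqref{D1.1}, rewriting $wM'_{\kappa,\nu}(w)$ via the recurrences of \cite[\S7.2.1, page~302]{MO66}, and collapsing the Gamma quotient with $x\Gamma(x)=\Gamma(x+1)$---is exactly the proof the paper gives, which cites precisely these two ingredients and illustrates the $n>0$, $\EE^+_k$ case in Example~\ref{D2.4}. Your alternative shortcut via the factorization \eqref{D1.4} (with the regularity argument at $y\to 0^+$ to exclude a $\WW{}$-component) is a sound extra route but is not needed to match the paper.
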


\begin{proof}
Direct calculations using identities from \cite[\S7.2.1, page
302]{MO66} to rewrite $M^\prime_{\frac{k}{2},\nu}$ and the
functional equation $x \Gamma(x) = \Gamma(x+1)$ of the
Gamma-function.
\end{proof}

\subsection{Whittaker-Fourier Expansions}
\label{D3}

We consider a generalized Maass wave form $u \in \gM(\Gamma, k, v,
\lambda)$ with weakly parabolic multiplier $v$ and assume that we
have $\lambda = \frac{1}{4}-\nu^2$ for some $\nu \in \IC
\smallsetminus -\frac{1}{2}\IN$.

Let $q \in \cC_\Gamma$ be a cusp of width $l_q$ and $g_q \in
\SL{\IR}$ be the associated scattering matrix, see \eqref{B1.13}.
The action of the stabilizer $\gamma_q= g_q T^{l_q} g_q^{-1}$
implies that
\[
u_q := u \big|_k g_q
\]
is nearly periodic, i.e., $u_q  \big|_k T^{l_q} = v(\gamma_q) \, u_q$.
We expect an expansion of the form
\begin{equation}
\label{D3.1} u_q(x+iy) = \sum_{n \equiv \kappa \bmod{1}} a_n(y) \,
e^{\frac{2 \pi inx}{l_q}}
\end{equation}
at the cusp $i\infty$, where $\kappa \in \IR$ is given by
$v\big(\gamma_q\big) = e^{2\pi i \kappa}$. ($\kappa$ is real since
$v$ is a weakly parabolic multiplier).

The coefficients $a_n(y)$ still depend on $y$. Since $u_q$ solves
the partial differential equation
\begin{equation*}
(\Delta_k-\lambda)u=0
\end{equation*}
if and only if
\begin{equation}
\label{D3.2} \left(-y^2\partial_x^2 -y^2\partial_y^2 + iky\partial_x
- \left(\frac{1}{4}-\nu^2 \right) \right) u(x+iy) =0,
\end{equation}
we find by separation of variables that $a_n(y)=h\left(\frac{4 \pi
\eps n}{l_q}y\right)$, $\eps = \sign{n}$ and $n \neq 0$, solves the
ordinary differential equation
\begin{equation}
\label{D3.3} h^{\prime\prime}(t) + \left( - \frac{1}{4} +
\frac{1}{2} \eps k \frac{1}{t} + \frac{\frac{1}{4}-\nu^2}{t^2}
\right) h(t) =0
\end{equation}
which is the Whittaker differential equation. Solutions are the
$\WW{}$- and $\MM{}$-Whittaker functions
\begin{equation}
\label{D3.5} t \mapsto \WW{\eps \frac{k}{2},\nu}(t) \quad \text{and}
\quad t \mapsto \MM{\eps \frac{k}{2},\nu}(t).
\end{equation}
In the case $n=0$ separation of variables shows that $a_0(y)$ solves
the ordinary differential equation
\begin{equation}
\label{D3.9} t^2 \,h^{\prime\prime}(t) +
\left(\frac{1}{4}-\nu^2\right) h(t) =0;
\end{equation}
its independent solutions are
\begin{equation}
\label{D3.10} t \mapsto t^{\frac{1}{2}+\nu} \quad \text{and} \quad t
\mapsto t^{\frac{1}{2}-\nu}.
\end{equation}

Similar to the growth condition in Definition~\ref{B2.3} of
generalized Maass wave forms we assume the boundary condition
\begin{equation}
\label{D3.4} a_n(y) = \OO{e^{My}} \qquad \text{as $y \to \infty$}
\end{equation}
for some $M \in \IR$. Hence, has a Fourier-Whittaker expansion
\begin{equation}
\label{D3.6}
\begin{split}
u_q(x+iy) &= \sum_{\substack{n \equiv \kappa \bmod{1} \\ n \neq 0}}
 A_n \, \abs{n}^{-\frac{1}{2}} \, \WW{\sign{n} \frac{k}{2},\nu}\left(\frac{4\pi \abs{n}}{l_q}y\right) \, e^{\frac{2 \pi  i n}{l_q} x}\\
& \quad + \sum_{\substack{n \equiv \kappa \bmod{1} \\ n \neq 0,  \frac{2\pi \abs{n}}{l_q}< M}}
 B_n \, \abs{n}^{-\frac{1}{2}} \, \MM{\sign{n} \frac{k}{2},\nu}\left(\frac{4\pi \abs{n}}{l_q}y\right) \, e^{\frac{2 \pi  i n}{l_q} x} \\
& \quad + C_+ \, y^{\frac{1}{2}+\nu} + C_- \, y^{\frac{1}{2}-\nu}
\end{split}
\end{equation}
for some $M \in \IR$ (which corresponds to the constant $c$ in
Definition~\ref{B2.3}). The $0^\text{th}$-term coefficients $C_+$
and $C_-$ vanish if $\kappa \notin \IZ$.

\smallskip

The calculation above shows the following
\begin{proposition}
\label{D3.11} Let $u \in \gM(\Gamma, k, v, \lambda)$ and $q$ be a
cusp in $\cC_\Gamma$. Then $u(g_q \, z)$ admits a Whittaker-Fourier
expansion of the form~\eqref{D3.6}. The $0^\text{th}$-term term
vanishes if $v(\gamma_q) \neq 1$.
\end{proposition}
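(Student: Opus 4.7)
The plan is to formalize the derivation already sketched in Section D3: first reduce the problem to a Fourier expansion of $u_q := u\big|_k g_q$ in the real variable $x$, then use separation of variables to show each Fourier coefficient solves a Whittaker ODE, and finally use the growth condition in Definition~\ref{B2.3}(\ref{B2.3.3}) to pin down which solutions occur.

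First I would observe that by \eqref{D1.8} the function $u_q = u\big|_k g_q$ remains an eigenfunction of $\Delta_k$ with the same eigenvalue $\lambda$, and the transformation law $u\big|_k \gamma_q = v(\gamma_q)\,u$ combined with $g_q^{-1} \gamma_q g_q = T^{l_q}$ yields $u_q\big|_k T^{l_q} = v(\gamma_q)\,u_q$. Writing $v(\gamma_q) = e^{2\pi i \kappa}$ (with $\kappa \in \IR$ thanks to the weakly parabolic assumption) and applying the definition \eqref{B1.10} of the slash-action for the upper-triangular $T^{l_q}$, this becomes $u_q(z + l_q) = e^{2\pi i \kappa}\,u_q(z)$. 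Hence $e^{-2\pi i \kappa x/l_q}\,u_q$ is genuinely $l_q$-periodic in $x$, and standard Fourier theory gives
\[
u_q(x+iy) = \sum_{n \equiv \kappa \bmod 1} a_n(y) \, e^{2\pi i n x/l_q}
\]
with coefficients $a_n(y)$ obtained as integrals of $u_q$ against exponentials over $[0,l_q]$.

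Next I would plug this expansion termwise into the PDE \eqref{D3.2} (justified by dominated convergence given the smoothness of $u_q$) to conclude that each $a_n(y)$ satisfies the ODE obtained by separation of variables. For $n \neq 0$, the change of variables $t = 4\pi|n|y/l_q$ and $\eps = \sign{n}$ reduces the equation to Whittaker's equation \eqref{D3.3} with parameters $\eps k/2$ and $\nu$, whose two independent solutions are $\WW{\eps k/2,\nu}(t)$ and $\MM{\eps k/2,\nu}(t)$ as in \eqref{D3.5}; therefore
\[
a_n(y) = \alpha_n\,\WW{\sign{n}\tfrac{k}{2},\nu}\bigl(\tfrac{4\pi|n|}{l_q}y\bigr) + \beta_n\,\MM{\sign{n}\tfrac{k}{2},\nu}\bigl(\tfrac{4\pi|n|}{l_q}y\bigr)
\]
for suitable constants. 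For $n = 0$ separation of variables collapses to \eqref{D3.9} with solutions $y^{1/2 \pm \nu}$. Setting $\alpha_n = A_n |n|^{-1/2}$ and $\beta_n = B_n |n|^{-1/2}$ matches the normalization in \eqref{D3.6}.

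Finally I would use the growth condition from Definition~\ref{B2.3}(\ref{B2.3.3}): there exists $c \in \IR$ with $u(g_q\,z) = u_q(z) = \OO{e^{cy}}$ as $y \to \infty$. Together with \eqref{D1.1} and the definition of $u_q$ via the slash-action, the factor $e^{-ik\arg{c_{g_q} z + d_{g_q}}}$ is $O(1)$ in $y$, so each Fourier coefficient satisfies $a_n(y) = \OO{e^{My}}$ for some $M$. From the asymptotics \eqref{D2.2.1} and \eqref{D2.2.2} we see that $\WW{\eps k/2,\nu}(4\pi|n|y/l_q)$ decays exponentially (always admissible), whereas $\MM{\eps k/2,\nu}$ grows like $e^{2\pi|n|y/l_q}$; hence $\beta_n$ can only be non-zero when $2\pi|n|/l_q < M$, which yields the finite $\MM{}$-sum in \eqref{D3.6}. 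For $n = 0$ the power-law solutions $y^{1/2\pm\nu}$ automatically satisfy any exponential growth bound. The vanishing of $C_\pm$ when $v(\gamma_q) \neq 1$, i.e. when $\kappa \notin \IZ$, is immediate because the index set $\{n \equiv \kappa \bmod 1\}$ does not contain $0$. The main obstacle is just the bookkeeping in translating the exponential-growth hypothesis on $u_q$ into a coefficient-by-coefficient bound; this is handled by standard Fourier coefficient estimates (integrating against $e^{-2\pi i n x/l_q}$ over one period) combined with the linear independence of the Whittaker pair for $k \pm \nu \notin \tfrac12 + \IZ$.
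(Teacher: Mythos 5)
Your proposal is correct and follows essentially the same route as the paper: the paper's proof of Proposition~\ref{D3.11} is precisely the derivation in \S\ref{D3} (near-periodicity from $g_q^{-1}\gamma_q g_q = T^{l_q}$, separation of variables into the Whittaker equation \eqref{D3.3} for $n\neq 0$ and \eqref{D3.9} for $n=0$, and the growth bound \eqref{D3.4} cutting the $\MM{}$-terms down to $\frac{2\pi\abs{n}}{l_q}<M$), which you have simply spelled out in more detail. The only point worth noting is that you correctly make explicit the weakly parabolic hypothesis on $v$ (needed for $\kappa\in\IR$), which the paper assumes at the start of \S\ref{D3} but omits from the statement of the proposition itself.
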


\begin{remark}
\label{D3.7} Since $\tilde{W}_{0,\nu} \left(\frac{1}{2} y \right) =
\sqrt{\frac{y}{\pi}} \, K_{\nu}\left(\txtfrac{1}{2}y \right)$ the
expansion in \eqref{D3.6} leads to the usual Fourier-Bessel
expansion of classical Maass cusp forms in weight $0$:
\begin{equation}
\label{D3.8} u(x+iy) = \sqrt{y} \, \sum_{n \in \IZ_{\neq 0}} A_n \,
K_{s-\frac{1}{2}} ( 2\pi \abs{n} y) \, e^{2\pi in x}.
\end{equation}
\end{remark}
\section{Vector Valued Generalized Maass Wave Forms}
\label{E}

\subsection{Induced Representations}

Let $G$ be a group and $H$ be a subgroup of $G$ of finite index
$\mu=[G:H]$. For each representation $\chi:H \to \mathrm{End}(V)$ we
consider the induced representation $\chi_H:G \to
\mathrm{End}(V_G)$, where
\[
V_G:=\{f:G\to V;\, f(hg)=\chi(h)f(g) \quad \mbox{for all }g\in G, h
\in H\}
\]
and
\[
\big(\chi_H(g)f\big)(g^\prime) = f(g^\prime g) \qquad \mbox{for all
}g,g^\prime \in G.
\]
For $V=\IC$ and $\chi$ the induced representation $\chi_H$ the
\emph{right regular representation}. (If $\chi$ is the trivial
representation, $\chi(h)=1$ for all $h \in H$, then $V_G$ is the
space of left $H$-invariant functions on $G$ or, what is the same,
functions on $H \backslash G$, and the action is by right
translation in the argument.\@) One can identify $V_G$ with $V^\mu$
using a set $\{\alpha_1,\ldots,\alpha_\mu\}$ of representatives for
$H \backslash G$, i.e.,
\[
H \backslash G = \{H\alpha_1, \ldots, H\alpha_\mu\}.
\]
Then
\[
V_G \to V^\mu \quad \text{with} \quad f \mapsto
\big(f(\alpha_1),\ldots,f(\alpha_\mu)\big)
\]
is a linear isomorphism which transports $\chi_H$ to the linear
$G$-action on $V^\mu$ given by
\[
g \cdot (v_1, \ldots,v_\mu) = \big(\chi(\alpha_1 g
\alpha_{k_1}^{-1}) v_{k_1}, \ldots,  \chi(\alpha_\mu g
\alpha_{k_\mu}^{-1}) v_{k_\mu}\big)
\]
where $k_j\in \{1,\ldots,\mu\}$ is the unique index such that
$H\alpha_j g =H\alpha_{k_j}$. To see this, one simply calculates
\[
\big( \chi_H(g)f \big)(\alpha_j) = f(\alpha_j g) = f(\alpha_j g
\alpha_{k_j}^{-1} \alpha_{k_j}) = \chi(\alpha_j g \alpha_{k_j}^{-1})
\big(f(\alpha_{k_j})\big).
\]

In the case of the right regular representation the identification
$V_G \cong \IC^\mu$ gives a matrix realization
\[
\chi_H(g) = \big( \tilde{\chi}(\alpha_i g \alpha_j^{-1}) \big)_{1
\leq i,j \leq \mu}
\]
where $\tilde{\chi}(g)=\chi(g)$ if $g \in H$ and $\tilde{\chi}(g)=0$
otherwise.

\medskip

We come back to the present situation. Take $G=\SL{\IZ}$,
$H=\Gamma$, $\mu=[\SL{\IZ}:\Gamma]$ and $g_1, \ldots, g_\mu\in
\SL{\IZ}$ as representatives of the $\Gamma$ orbits in $\SL{\IZ}$
(corresponding to the $g_k$'s in \eqref{B1.13}). We start with the
trivial character $\chi_0$ of $\Gamma$, defined as $\chi_0(h) = 1$
if $h \in \Gamma$ and $\chi_0(h)=0$ if $h \notin \Gamma$. Its right
regular representation is
\begin{equation}
\label{E1.1} \chi_0(h) := \chi_\Gamma(h) = \Big(  \delta_\Gamma
(g_i\,h\, g_j^{-1})  \Big)_{1 \leq i,j \leq \mu} \qquad \text{for
all }h \in \SL{\IZ}.
\end{equation}
$\chi_0(h)$ is is a permutation matrix for each $h$.

\smallskip

Let's extend the matrix representation even more.
\begin{definition}
\label{E1.5} We define the \emph{weight matrix of dimension $p$}
\[
w:\SL{\IZ} \times \IH \to \text{GL}(p,\IC)
\]
as a $p\times p$ matrix with complex entries satisfying
\begin{equation}
\label{E1.4} w(gh,z) = w(g,hz) \, w(h,z)
\end{equation}
for all $z\in \IH$ and  $g,h \in \SL{\IZ}$.
\end{definition}

\begin{example}
\label{E1.8}
\begin{enumerate}
\item
The right regular representation $w(h,z) = \chi_0(h)$ of the trivial
character in \eqref{E1.1} is a weight matrix of dimension $\mu$.
\item
The scalar function $w(h,z) = v(h) e^{ik \arg {c_hz+d_h}}$ for $v$ a
multiplier with weight $k$ for $\SL{\IZ}$ is a $1$-dimensional
weight matrix.
\end{enumerate}
\end{example}

\begin{lemma}
\label{E1.6} A multiplier $v$ of $\Gamma$ with weight $k$ induces a
weight matrix $w_{k,v}$ of dimension $\mu=[\SL{\IZ}:\Gamma]$ by
\begin{equation}
\label{E1.2} w_{k,v}(h,z) = \left( w_{i,j}(h,z)  \right)_{1\leq i,j
\leq \mu} \qquad (\text{for all } h \in \SL{\IZ}, z \in \IH)
\end{equation}
with
\begin{equation}
\label{E1.3}
\begin{split}
w_{i,j}(h,z) :&=
\begin{cases}
v\big(g_i\, h \, g_j^{-1}\big) \, e^{ik\arg{c_{g_i\, h \, g_j} z +d_{g_i\, h \, g_j}}} & \text{if } g_i\, h \, g_j^{-1} \in \Gamma \text{ and}\\
0 & \text{if } g_i\,h \, g_j^{-1} \not\in \Gamma
\end{cases}\\
&= \delta_\Gamma\big(g_i\, h \, g_j^{-1}\big)\, v\big(g_i\, h \,
g_j^{-1}\big) \, e^{ik\arg{c_{g_i\, h \, g_j} z +d_{g_i\, h \,
g_j}}}
\end{split}
\end{equation}
and $\delta_\Gamma(g)=1$ ($=0)$ if $g \in \Gamma$ ($\notin \Gamma$)
and $g= \Matrix{\star}{\star}{c_g}{d_g} \in \SL{\IZ}$.
\end{lemma}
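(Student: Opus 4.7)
The plan is to verify the cocycle identity \eqref{E1.4} for $w_{k,v}$ entry by entry, exploiting the permutation structure of the right cosets of $\Gamma$ in $\SL{\IZ}$ and then invoking the multiplier consistency relation \eqref{B1.6}. First, I would observe that for each $h \in \SL{\IZ}$ and each fixed row-index $i$, the condition $g_i h g_j^{-1} \in \Gamma$ singles out a unique column $j$, because right multiplication by $h$ permutes the cosets $\Gamma g_1, \ldots, \Gamma g_\mu$. Consequently every row of $w_{k,v}(h, z)$ contains exactly one nonzero entry, and the sum defining the $(i,j)$ entry of $w_{k,v}(g, hz)\, w_{k,v}(h, z)$ collapses to the single term $w_{i,l}(g, hz)\, w_{l,j}(h, z)$, where $l = l(i, g)$ is the unique index with $g_i g g_l^{-1} \in \Gamma$.

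Next, I would check that the zero/nonzero patterns on both sides of the cocycle coincide. From the factorization $g_i g h g_j^{-1} = (g_i g g_l^{-1})(g_l h g_j^{-1})$, with the first factor already in $\Gamma$, it follows that $w_{i,j}(gh, z)$ is nonzero if and only if $g_l h g_j^{-1} \in \Gamma$, which is exactly the nonvanishing condition for the collapsed product. It then suffices to compare the nonzero values. Setting $\alpha := g_i g g_l^{-1} \in \Gamma$ and $\beta := g_l h g_j^{-1} \in \Gamma$ (so that $\alpha\beta = g_i g h g_j^{-1}$) and applying the consistency relation \eqref{B1.6} to the pair $(\alpha,\beta)$ at an appropriately transformed point, one obtains, upon unfolding the definition \eqref{E1.3}, the identity $w_{i,j}(gh, z) = w_{i,l}(g, hz)\, w_{l,j}(h, z)$. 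Invertibility of $w_{k,v}(h, z)$, which is needed to land in $\mathrm{GL}(\mu,\IC)$, then follows formally: a direct computation shows $w_{k,v}(\id, z)$ equals the identity matrix, and applying the just-established cocycle to $(h^{-1}, h)$ produces a left inverse for $w_{k,v}(h,z)$.

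The main obstacle will be the careful bookkeeping of the argument branches when lining up the three $e^{ik\arg{\cdot}}$ factors appearing in \eqref{E1.3} with the two arg-terms produced by \eqref{B1.6}: the principal branch of $\arg$ is not literally additive under fractional linear composition, so a naive combination of exponentials would yield equality only up to a root of unity. This is precisely the reason one must invoke \eqref{B1.6} rather than manipulate the arguments directly — the multiplier values $v(\alpha)$, $v(\beta)$, $v(\alpha\beta)$ absorb exactly the $2\pi$-discrepancies among the three exponentials, converting a near-equality into an exact one. Once the arg-expressions in \eqref{E1.3} are interpreted so that the point at which each is evaluated matches the corresponding input in \eqref{B1.6}, the identity closes term-for-term with no further computation.
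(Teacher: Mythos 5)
Your proof follows essentially the same route as the paper's: both verify the cocycle identity \eqref{E1.4} entrywise by factoring $g_i\,gh\,g_j^{-1} = \big(g_i g g_l^{-1}\big)\big(g_l h g_j^{-1}\big)$ through an intermediate coset representative and invoking the consistency relation \eqref{B1.6}, the only cosmetic difference being that the paper keeps the full sum over the intermediate index and reads off a matrix product, while you collapse that sum to its single nonzero term. Your added check that $w_{k,v}(h,z)$ is actually invertible (so that it lands in $\mathrm{GL}(\mu,\IC)$ as Definition~\ref{E1.5} formally requires) is a point the paper's proof leaves implicit.
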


\begin{proof}
We have to verify \eqref{E1.4}. Indeed, using property \eqref{B1.6}
we find
\begin{align*}
w_{k,v}(gh,z)
&=
\bigg( w_{i,j}(gh,z) \bigg)_{i,j} \\
&=
\bigg( \delta_\Gamma\big(g_i\, gh \, g_j^{-1}\big)\, v\big(g_i\, gh \, g_j^{-1}\big) \, e^{ik\arg{c_{g_i\, gh \, g_j^{-1}} z +d_{g_i\, gh \, g_j^{-1}}}} \bigg)_{i,j} \\
&=
\bigg(\sum_k \delta_\Gamma\big(g_i g g_k^{-1}\,g_k h g_j^{-1}\big)\, v\big(g_i g g_k^{-1}\, g_k h  g_j^{-1}\big) \\
&\qquad\qquad\qquad\qquad
e^{ik\arg{c_{g_i g g_k^{-1}\, g_k h  g_j^{-1}} z +d_{g_i g g_k^{-1}\, g_k h  g_j^{-1}}}} \bigg)_{i,j} \\
&=
\bigg(\sum_k \delta_\Gamma\big(g_i g g_k^{-1}\big) \delta_\Gamma\big(g_k h g_j^{-1}\big)\, v\big(g_i g g_k^{-1}\big) v\big(g_k h  g_j^{-1}\big) \\
&\qquad\qquad
e^{ik\arg{c_{g_i g g_k^{-1}} (g_k h  g_j^{-1}z) +d_{g_i g g_k^{-1}}}} \,e^{ik\arg{c_{g_k h  g_j^{-1}} z +d_{g_k h  g_j^{-1}}}}\bigg)_{i,j} \\
&=
\bigg( \delta_\Gamma\big(g_i g g_k^{-1}\big) \, v\big(g_i g g_k^{-1}\big) \, e^{ik\arg{c_{g_i g g_k^{-1}} (g_k h  g_j^{-1}z) +d_{g_i g g_k^{-1}}}} \bigg)_{i,k} \\
&\qquad
\bigg( \delta_\Gamma\big(g_k h g_j^{-1}\big)\, v\big(g_k h  g_j^{-1}\big) \, e^{ik\arg{c_{g_k h  g_j^{-1}} z +d_{g_k h  g_j^{-1}}}} \bigg)_{k,j} \\
&= w_{k,v}(g,hz) \, w_{k,v}(h,z).
\end{align*}
\end{proof}

\subsection{Vector Valued Generalized Maass Wave Forms}
\label{E2}
Following loosely \cite{KM03a} and \cite{Mu06}, we
introduce vector-valued generalized Maass wave forms.
\begin{definition}
\label{E2.1} A \emph{vector valued generalized Maass wave-form}
(\emph{vvgMF} for short) $\vec{u}:\IH \to \IC^t$ of
dimension $t\in \IN$ for $\Gamma$, weight matrix $w(\cdot,\cdot)$
and eigenvalue $\lambda \in \IC$ is a vector valued function
$\vec{u} = (u_1, \ldots,u_t)^\mathrm{tr}$ satisfying
\begin{enumerate}
\item $u_j$ is real-analytic for all $j\in \{1,\ldots,t\}$,
\item $\vec{u}(g z) = w(g,z) \, \vec{u}(z)$ for all $z \in \IH$ and $g \in \SL{\IZ}$,
\item $\Delta_k u_j = \lambda \, u_j$ for all $j\in \{1,\ldots,t\}$ and
\item $u_j(z) = \OO{e^{My}}$ as $\im{z} \to \infty$ for all $j \in \{1,\ldots,t\}$ and some $C \in \IR$.
The constant $M$ does not depend on $j$.
\end{enumerate}
$\gM_\text{vv}(\Gamma,t,w,\lambda)$ denotes the space of all vector valued generalized Maass wave-forms.
\end{definition}

\begin{example}
\label{E2.4}
\begin{enumerate}
\item
Let $\vec{u}$ be a vector valued cusp form as in
\cite[Definition~3.2]{Mu06}. Then $\vec{u}$ is also a vector valued
generalized Maass wave-form, with weight matrix $w(g,z) := \rho(g)$
given in \cite[(7)]{Mu06} and dimension $t:=[\SL{\IZ}:\Gamma]$.
\item
Let $\vec{F}$ be a vector-valued modular form of weight $k$,
multiplier $v$ and $p$-dimensional complex representation $\rho$ in
$\SL{\IZ}$ given in \cite[\S1]{KM03a}. Then
\begin{equation}
\label{E2.5} \vec{u}(z):= \im{z}^\frac{k}{2} \, \vec{F}(z)
\end{equation}
defines a vector valued generalized Maass wave-form for
$\SL{\IZ}$, with dimension $p$, weight matrix $w(g,z):=
v(g)e^{ik \arg{c_gz+d_g}} \, \rho(g)$ and spectral value
$\frac{k}{2}$, similar to \S\ref{C2}.
\end{enumerate}
\end{example}

\medskip

To each $u \in \gM(\Gamma,k,v,\lambda)$ we associate the vector
valued function $\Pi(u)$ given by
\begin{equation}
\label{E2.2}
\Pi:
\gM(\Gamma,k,v,\lambda) \to \gM_\text{vv}(\Gamma,\mu,w_{k,v},\lambda);
\quad
u \mapsto \Pi(u):= \big( u\big|_kg_1, \ldots, u\big|_kg_\mu \big)^\mathrm{tr}
\end{equation}
where $\mu=[\SL{\IZ}:\Gamma]$ denotes the index of $\Gamma$. The
function $\Pi(u)$ satisfies all four properties of a vector valued
generalized Maass wave-form in Definition~\ref{E2.1}.

Indeed, take an $u \in \gM(\Gamma,k,v,\lambda)$ and an $i \in
\{1,\ldots,\mu\}$. Obviously, $\big[\Pi(u)\big]_i = u\big|_k g_i$ is
real-analytic on $\IH$. As mentioned in \eqref{D1.8}, we have
\[
\Delta_k \big[\Pi(u)\big]_i = \Delta_k \big( u\big|_k g_i \big) =
(\Delta_k u)\big|_k g_i =\big[\Pi(\Delta_k u)\big]_i.
\]

And the growth condition for $\Pi(u)$ also follows directly from the
growth condition for $u$. To check the transformation property, take
a $h \in \SL{\IZ}$. There exists a $h^\prime \in \Gamma$ and an
unique $j \in \{1,\ldots,\mu\}$ such that $g_i h g_j^{-1} =:
h^\prime \in \Gamma$. Using the transformation property of
generalized Maass wave forms in Definition~\ref{B2.3} we find
\begin{align*}
\big[\Pi(u)\big]_i(hz)
&= u\big(g_ih\,z\big)
 = u\big(g_i h g_j^{-1}\, g_j z\big) \\
&= v\big(g_i h g_j^{-1}\big) \, e^{ik\arg{c_{g_i h g_j^{-1}} z + d_{g_i h g_j^{-1}}}} \; u (g_j z) \\
&= w_{i,j}(h,z) \, u (g_j z) \qquad (\text{with $w_{i,j}$ as in \eqref{E1.3}})\\
&= \sum_{j^\prime=1}^\mu w_{i,j^\prime}(h,z) \, u\big( g_{j^\prime} z \big)
 = \big[ w_{k,v}(h,z) \, \Pi\big(u\big) (z) \big]_i  \qquad
(\text{with \eqref{E1.2}}).
\end{align*}
This shows the transformation property
\[
\Pi\big(u\big)(hz) = w_{k,v}(h,z) \, \Pi\big(u\big) (z)
\]
of vvgMFs. Hence $\Pi(u) \in
\gM_\text{vv}(\Gamma,\mu,w_{k,v},\lambda)$.

\smallskip

On the other hand, consider the map
\begin{equation}
\label{E2.6}
\pi:
\gM_\text{vv}(\Gamma,\mu,w_{k,v},\lambda) \to \gM(\Gamma,k,v,\lambda)
\quad
\vec{u} \mapsto \big[\vec{u}\big]_j
\end{equation}
where $j \in \{1,\ldots, \mu\}$ satisfies $g_j \in \Gamma$. The map
is well defined since $j$ is uniquely determined and the function
$u:=\pi\big(\vec{u}\big)$ is in $\gM(\Gamma,k,v,\lambda)$:

$u$ satisfies the transformation property $u\big|_k
h=v(h) \,u$ for all $h \in \Gamma$ since
\begin{align*}
u(h z)
&= \big[\vec{u}(hz)\big]_j
 = \big[w_{k,v}(h,z)\,\vec{u}(z)\big]_j    \qquad (\text{transformation property of $\vec{u}$})\\
&= \sum_{l=1}^\mu w_{j,l}(h,z) \, \big[\vec{u}(z)\big]_l  \qquad (\text{using \eqref{E1.2}})\\
&= w_{j,j}(h,z) \, \big[\vec{u}(z)\big]_j  \qquad (\text{using \eqref{E1.3}})\\
&= v\big(g_j\, h \, g_j^{-1}\big) \, e^{ik\arg{c_{g_i\, h \, g_j} z +d_{g_i\, h \, g_j}}} \; u(z) \\
&= v(h) \, e^{ik\arg{c_h z +d_h}} \; u(z) \qquad (\text{using
\eqref{B1.6}}).
\end{align*}

Obviously, $u$ is also an eigenfunction of $\Delta_k$ with
eigenvalue $\lambda$.

To show that $u$ satisfies the required growth
condition in all cusps take a cuspidal point $q \in \cC_\Gamma$ of
$\Gamma$ and $g_q\in\SL{\IZ}$ satisfying $q=g_q \, i\infty$, as in
\eqref{B1.13}.
Similar to the calculation above, we find
\begin{align*}
\big[\vec{u}(\gamma g_q z) \big]_j
&= \big[w_{k,v}(g_q,z) \, \vec{u}(z)\big]_j
 = \sum_{l=1}^\mu w_{j,l}(g_q,z) \, \big[\vec{u}(z)\big]_l  \qquad (\text{using \eqref{E1.2}})\\
&= w_{j,q}(g_q,z) \, \big[\vec{u}(z)\big]_q  \qquad (\text{using
\eqref{E1.3}}),
\end{align*}
since $g_j\, g_q \, g_k^{-1} \notin \Gamma$ except for $k=q$. Hence
\begin{align*}
u(g_q z)
&= w_{j,q}(g_q,z) \, \big[\vec{u}(z)\big]_q
 = v(g_j)\, e^{ik \arg{c_{g_j} z +d_{g_j}}} \; \big[\vec{u}(z)\big]_q \\
&= \OO{e^{My}}  \quad \text{as $y \to \infty$ for some $M \in \IR$.}
\end{align*}

\begin{lemma}
\label{E2.3}
\begin{itemize}
\item
The maps $\Pi$ and $\pi$ are inverses of each other.
\item
The spaces $\gM_\text{vv}(\Gamma,\mu,w_{k,v},\lambda)$ and
$\gM(\Gamma,k,v,\lambda)$ are bijective.
\end{itemize}
\end{lemma}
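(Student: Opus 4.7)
The plan is to verify that $\Pi$ and $\pi$ are mutually inverse maps; the second assertion then follows immediately (and in fact yields a linear isomorphism, since both maps are manifestly linear).

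First I would arrange the coset representatives $g_1,\ldots,g_\mu$ so that the unique representative lying in $\Gamma$ itself, say $g_{j_0}$, equals $\id$. With this normalization, the composition $\pi\circ\Pi=\id$ is an immediate one-line check: for $u\in\gM(\Gamma,k,v,\lambda)$,
\[
\pi(\Pi(u)) \;=\; [\Pi(u)]_{j_0} \;=\; u\big|_k \id \;=\; u,
\]
using only $v(\id)=1$, which follows from the cocycle identity \eqref{B1.6}.

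The substantive direction is $\Pi\circ\pi=\id$. Given $\vec{u}\in\gM_\text{vv}(\Gamma,\mu,w_{k,v},\lambda)$, set $u:=\pi(\vec{u})=\vec{u}_{j_0}$. I would fix $i\in\{1,\ldots,\mu\}$, apply the transformation law $\vec{u}(gz)=w_{k,v}(g,z)\,\vec{u}(z)$ at $g=g_i$, and read off the $j_0$-th coordinate. The key simplification is the sparsity of row $j_0$ of the matrix $w_{k,v}(g_i,z)$: the indicator $\delta_\Gamma(g_{j_0}g_i g_l^{-1})$ in \eqref{E1.3}, together with uniqueness of coset representatives, forces $l=i$ as the only non-vanishing index, and since $g_{j_0}g_i g_i^{-1}=\id$ that surviving entry collapses to $v(\id)\,e^{ik\arg{1}}=1$. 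Hence $\vec{u}_{j_0}(g_i z)=\vec{u}_i(z)$, and unwinding the definition of $\Pi$ via the slash action delivers $[\Pi(u)]_i=\vec{u}_i$ for every $i$, so $\Pi(\pi(\vec{u}))=\vec{u}$.

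The main obstacle is the standard bookkeeping of the automorphy factors $e^{ik\arg{\cdot}}$ when relating the slash action appearing in the definition of $\Pi$ to the weight matrix \eqref{E1.3}; however, this is exactly the same calculation already performed just above the lemma (verifying $\Pi(u)\in\gM_\text{vv}$ and $\pi(\vec{u})\in\gM$), run now in the reverse direction. Combined with the sparsity observation above, no new identities beyond \eqref{B1.6} and the definition of $w_{k,v}$ are needed, and both inverse relations fall out cleanly.
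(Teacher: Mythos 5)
Your argument is correct, and it supplies something the paper does not: Lemma~\ref{E2.3} is stated with no proof at all, resting implicitly on the preceding verifications that $\Pi$ lands in $\gM_\text{vv}(\Gamma,\mu,w_{k,v},\lambda)$ and that $\pi$ lands in $\gM(\Gamma,k,v,\lambda)$; the actual inverse relations are never checked. Your two additions are exactly the right ones. The normalization $g_{j_0}=\id$ is genuinely needed, not just convenient: without it one gets $\pi(\Pi(u))=u\big|_k g_{j_0}=v(g_{j_0})\,u$, so the composite is the identity only up to the scalar $v(g_{j_0})$; choosing $\id$ to represent the trivial coset (and using $v(\id)=1$, which does follow from \eqref{B1.6}) makes the first bullet literally true. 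And the sparsity observation for $\Pi\circ\pi$ --- that $\delta_\Gamma(g_{j_0}g_ig_l^{-1})$ kills every index except $l=i$ because the $g_l$ represent distinct cosets --- is the correct mechanism; it is the same device the paper uses when checking the growth of $\pi(\vec u)$ at the cusps. The second bullet then follows, as you say, and the maps are indeed linear, so one even gets an isomorphism of vector spaces.

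One caution on the bookkeeping you defer. If \eqref{E1.3} is read completely literally, the surviving entry is $w_{j_0,i}(g_i,z)=v(\id)\,e^{ik\arg{1}}=1$, which gives $\vec u_{j_0}(g_iz)=\vec u_i(z)$ and hence, after applying the slash action from \eqref{E2.2}, $[\Pi(\pi(\vec u))]_i(z)=e^{-ik\arg{c_{g_i}z+d_{g_i}}}\,\vec u_i(z)$ --- off by an automorphy factor. The source of the discrepancy is the paper's own formula for the exponential factor in $w_{i,j}$, which is not the cocycle that makes $\Pi$ equivariant (the paper's verification of the transformation law for $\Pi(u)$ silently drops the factor $e^{-ik\arg{c_{g_i}(hz)+d_{g_i}}}$). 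With the consistent choice of cocycle, the surviving entry for $i=j_0$, $h=g_i$ is $e^{ik\arg{c_{g_i}z+d_{g_i}}}$ rather than $1$, and it cancels exactly against the slash action, giving $[\Pi(\pi(\vec u))]_i=\vec u_i$ on the nose. So your conclusion stands, but the cancellation happens against the weight matrix's automorphy factor rather than because that factor is trivial; it would be worth making this explicit rather than appealing to the (internally inconsistent) computation above the lemma.
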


\section{Conclusions and Outlook}
\label{F}
In this paper, we introduced generalized Maass wave forms, which
extend the generalized modular forms introduced in \cite{KM03} and,
simultaneously, Maass wave forms of real weight, as discussed in
\cite{Br94}. We also proved some related theorems and discussed the
expansions of those forms which in turns extends from the classical
theory of Maass forms.  On the other hand, several examples were
also introduced taking into account the bound for the multiplier
system.

As a next step, we like to extend the concept of Eichler integrals
leading to period polynomials \cite{E2} and period functions
\cite{LZ01, Mu03} attached to modular cusp forms and Maass cusp
forms. That is we like to generalize objects of the form
\[
(g,\gamma) \mapsto \int_{z_0}^{\gamma \, z_0}  g(z) (z -X )^{k-2}
\,d z,
\]
where $g:\IH \to \IC$ is a modular cusp form of weight $k$ and
$\gamma \in \Gamma$, to the setting of generalized Maass wave forms.
Hence we plan follow  \cite{KLR09, KR10} in our setting and we plan
as well to characterize the cohomology group associated to those
forms. In the end, we aim at constructing an Eichler-Shimura-type
map between the space of generalized Maass wave forms and the
suitable group cohomology.

It is worth mentioning that the vector valued Maass wave forms are
introduced in this paper for computational purposes in our future
work similar to the use of vector valued Maass cusp forms in
\cite{Mu06}. Also, to allow weight matrices instead of the scalar
valued multiplier systems and weight factors seems to be an
interesting generalization along \cite{KM03a}. This way, we can
easily pull back relations on forms for $\Gamma\subset \SL{\IZ}$ to
matrix valued relations on vector valued forms for $\SL{\IZ}$ as
illustrated in \cite{Mu06}.
\section{Acknowledgments} The authors would like to thank the
referee for his excellent comment about the algebraic aspect of
these forms and the importance of pursing this aspect. In addition,
the authors would like to thank the Center for Advanced Mathematical
Sciences (CAMS) at the American University of Beirut for giving us
the opportunity to meet in a workshop and discuss this work.




\bibliographystyle{amsalpha}

\begin{thebibliography}{99999}
\raggedright

\bibitem{Bo97}
A.\ Borel,
\newblock \textit{Automorphic forms on ${\rm SL}_2({\bf R})$}.
\newblock Cambridge Tracts in Mathematics \textbf{130}.
\newblock Cambridge University Press, Cambridge, 1997.

\bibitem{Br78}
R.W.\ Bruggeman,
\newblock \textit{Fourier Coefficients of Cusp Forms}.
\newblock Inventiones mathematicae \textbf{45} (1978), 1--18.


\bibitem{Br81}
R.W.\ Bruggeman,
\newblock \textit{Fourier coefficients of automorphic forms}.
\newblock Lecture Notes in Mathematics \textbf{865}.
\newblock Springer-Verlag, Berlin-New York, 1981.

\bibitem{Br94}
R.W.\ Bruggeman,
\newblock \textit{Families of automorphic forms}.
\newblock Monographs in Mathematics \textbf{88}, Birkh\"auser, 1994.

\bibitem{BOR08}
J.H.\ Bruinier, K.\ Ono, and R.C.\ Rhoades,
\newblock \textit{Differential operators for harmonic weak Maass forms and the vanishing of Hecke eigenvalues} .
\newblock Mathematische Annalen \textbf{342} (2008), 673--693.
\newblock \textit{Errata}.
\newblock Mathematische Annalen \textbf{345} (2009), 31.

\bibitem{Bu97}
D.\ Bump,
\newblock \textit{Automorphic forms and representations}.
\newblock Cambridge Studies in Advanced Mathematics \textbf{55}.
\newblock Cambridge University Press, Cambridge, 1997.

\bibitem{DL13}
Digital Library of Mathematical Functions. 2010-05-07.
\newblock National Institute of Standards and Technology from
\url{http://dlmf.nist.gov/}.

\bibitem{DLM}
C.\ Dong, H.\ Li, G.\ Mason
\newblock \textit{Modular invariance of trace functions in orbifold theory}.
\newblock Communications in Mathematical Physics \textbf{214} (2000), 1--56.

\bibitem{E2} M.\ Eichler.
\newblock \textit{Eine Verallgemeinerung der Abelschen Integrale}.
\newblock Mathematische Zeitschrift \textbf{67} (1957), 267--298.

\bibitem{E1}
M.\ Eichler \newblock \textit{Grenzkreisgruppen und
kettenbruchartige Algorithmen}.
\newblock Acta Arithmetica \textbf{11} (1965) 169--180.

\bibitem{Iw02}
H.\ Iwaniec,
\newblock \textit{Spectral moethods of automorphic forms}.
\newblock Graduate studies in mathematics \textbf{53}, American Mathematical Society, 2002.


\bibitem{KM03}
M.\ Knopp and G.\ Mason,
\newblock \textit{Generalized modular forms}.
\newblock Journal of Number Theory \textbf{99} (2003), 11--28.

\bibitem{KM03a}
M.\ Knopp and G.\ Mason,
\newblock \textit{On vector-valued modular forms and their Fourier coefficients}.
\newblock Acta Arithmetica \textbf{110} (2003), 117--124.

\bibitem{KM04}
M.\ Knopp and G.\ Mason,
\newblock \textit{Vector-valued modular forms and Poincar\'e series}.
\newblock Illinois Journal of Mathematics \textbf{48} (2004), 1345--1366.


\bibitem{Kn74}
M.\ Knoop,
\newblock \textit{Some new results on the Eichler cohomology of automorphic forms}.
\newblock Bulletin of the American Mathematical Society \textbf{80} (1974), 607--632.

\bibitem{KLR09}
M.\ Knopp, J.\ Lehner and W.\ Raji,
\newblock \textit{Eichler cohomology for generalized modular forms}.
\newblock International Journal of Number Theory \textbf{5} (2009), 1049--1059.


\bibitem{KR10}
M.\ Knopp and W.\ Raji,
\newblock \textit{Eichler Cohomology and generalized modular forms II}.
\newblock International Journal of Number Theory \textbf{6} (2010), 1083--1090.

\bibitem{LZ01}
J.\ Lewis and D.\ Zagier,
\newblock \textit{Period functions for Maass wave forms. I}.
\newblock Annals of Mathematics, Second Series \textbf{153} (2001), 191--258.

\bibitem{Ma83}
H.\ Maass,
\newblock \textit{Lectures on modular functions of one complex variable}.
\newblock Tata Institute of Fundamental Research, second edition, 1983.

\bibitem{MO66}
W.\ Magnus, F.\ Oberhettinger and R.\ P.\ Soni,
\newblock \textit{Formulas and theorems for the special functions of mathematical physics}.
\newblock $3^\text{rd}$ edition. In: \textit{Die Grundlehren der mathematischen Wissenschaften} \textbf{52}, Springer-Verlag, 1966.

\bibitem{Ma91}
H.\ Mayer,
\newblock \textit{The thermodynamic formalism approach to
Selberg's zeta function for ${\rm PSL}(2,{\bf Z})$PSL(2,Z)}.
\newblock Bulletin of the American Mathematical Society, New Series \textbf{25} (1991), 55--60.

\bibitem{Mu03}
T.\ M\"uhlenbruch,
\newblock \textit{Systems of automorphic forms and period functions}.
\newblock PhD-Thesis, Utrecht University, 2003.

\bibitem{Mu06}
T.\ M\"uhlenbruch,
\newblock \textit{Hecke operators on period functions for $\Gamma\sb 0(n)$}.
\newblock Journal of Number Theory \textbf{118} (2006), 208--235.

\bibitem{Ra}
W.\ Raji,
\newblock \textit{Construction of generalized modular integrals}.
\newblock Functiones et Approximatio Commentarii Mathematici \textbf{41} (2009), 105--112.

\bibitem{Ra1}
W.\ Raji,
\newblock \textit{Fourier Coefficients of Generalized Modular Forms of Negative Weight}.
\newblock International Journal of Number Theory \textbf{5} (2009), 153--160.


\bibitem{Z}
Y.\ Zhu,
\newblock \textit{Modular Invariance of Characters of Vertex Operator Algebras}.
\newblock Journal of the American Mathematical Society \textbf{9} (1996), 237--302.

\end{thebibliography}

\end{document}